\newcommand{\suchthat}{\,:\,}
\newcommand{\where}{\,|\,}
\newcommand{\quo}[1]{\overline{#1}}
\newcommand{\units}[1]{{#1^\times}}
\newcommand{\veps}{\varepsilon}
\newcommand{\vphi}{\varphi}
\newcommand{\nMat}[2]{\mathrm{M}_{#2}(#1)}
\newcommand{\SO}{{\mathrm{SO}}}
\newcommand{\uSO}{{\mathbf{SO}}}
\newcommand{\uO}{{\mathbf{O}}}
\newtheorem{thm}{Theorem} 
\newtheorem{lem}[thm]{Lemma}
\newtheorem{que}[thm]{Question}
\newtheoremstyle{roman} 
    {8.0pt plus 2.0pt minus 4.0pt}                    
    {8.0pt plus 2.0pt minus 4.0pt}                    
    {\normalfont}                
    {}                           
    {\bfseries}                  
    {.}                          
    {5pt plus 1pt minus 1pt}     
    {}  
\theoremstyle{roman}
\newtheorem{example}[thm]{Example}
\theoremstyle{plain}
\newcommand{\frakm}{{\mathfrak{m}}}
\DeclareMathOperator{\Br}{Br} %
\DeclareMathOperator{\End}{End} %
\DeclareMathOperator{\Hom}{Hom} %
\DeclareMathOperator{\id}{id} %
\DeclareMathOperator{\Jac}{Jac} %
\DeclareMathOperator{\Max}{Max}
\DeclareMathOperator{\Nrd}{Nrd} %
\newcommand{\op}{\mathrm{op}} %
\DeclareMathOperator{\rank}{rank}
\DeclareMathOperator{\Skew}{Skew} %
\DeclareMathOperator{\Spec}{Spec} %
\title[On The Orthogonal Group]{On The Non-Neutral Component of Outer Forms of The Orthogonal Group}
\author{Uriya A.\ First$^*$}
\date{\today}
\address{$^*$University of Haifa}
\email{uriya.first@gmail.com}
\begin{document}

\maketitle

Let $A$ be a central simple algebra over a field 
$F$ of characteristic not $2$,
and let $\sigma:A\to A$ be an orthogonal involution (see \cite[Ch.~I]{Knus_1998_book_of_involutions}
for the definitions). Let $\mathrm{O}(A,\sigma)$ denote the group
of elements $u\in A$ with $u^\sigma u=1$. The reduced norm
map  $\Nrd_{A/F}:A\to F$
restricts to a group homomorphism, 
$
\Nrd_{A/F}:\mathrm{O}(A,\sigma)\to \{\pm 1\}
$; its kernel,
$\SO(A,\sigma)$, is the special orthogonal group of $(A,\sigma)$. 
Both  $\mathrm{O}(A,\sigma)$ and $\SO(A,\sigma)$ can be regarded
as the $F$-points of algebraic groups over $F$, denoted 
$\uO(A,\sigma)$ and $\uSO(A,\sigma)$, respectively.

The question of whether $
\mathrm{O}(A,\sigma) 
$ has elements of reduced norm $-1$
is equivalent to asking whether the non-neutral
component of the algebraic group   $\uO(A,\sigma)$, which is 
an  $\uSO(A,\sigma)$-torsor, has an $F$-point. 
It is well-known
that such an $F$-point exists  if and only if $[A]$, the Brauer class of $A$,
is trivial in the Brauer group of $F$, denoted $\Br F$;
see \cite[Lemma~2.6.1b]{Kneser_1969_Galois_cohomology_classical_groups}, for instance.

In this note, we generalize this result to   Azumaya
algebras with orthogonal involutions over semilocal commutative rings.
Given a commutative ring $R$, recall
that an $R$-algebra $A$ is called Azumaya if $A$
is a finitely generated projective $R$-module
and $A(\frakm):=A\otimes_R (R/\frakm)$
is a central simple $(R/\frakm)$-algebra for every maximal ideal
$\frakm\in \Max R$. In this case, an $R$-linear involution $\sigma:A\to A$
is called orthogonal if its specialization $\sigma(\frakm):=\sigma\otimes_R \id_{R/\frakm}$
is orthogonal for all $\frakm\in \Max R$. 
See \cite[III.\S5, III.\S8]{Knus_1991_quadratic_hermitian_forms} or \cite{Ford_2017_separable_algebras} for 
an extensive discussion. Note also that $\Nrd_{A/R}$
takes $\mathrm{O}(A,\sigma)$ to $\mu_2(R):=\{\veps\in R\suchthat \veps^2=1\}$.
We prove:

\begin{thm}\label{TH:main-I}
	Let $R$ be a commutative semilocal ring with $2\in\units{R}$,
	let $A$ be an Azumaya $R$-algebra and let $\sigma:A\to A$ be an orthogonal involution.
	Then $\mathrm{O}(A,\sigma)$ contains elements of reduced norm $-1$
	if and only if $[A]=0$
	in $\Br R$.
\end{thm}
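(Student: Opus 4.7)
The plan is to prove the two implications separately, invoking the given field-case theorem for the forward direction and constructing an explicit reflection for the converse.

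For the implication $\Nrd_{A/R}(u) = -1 \Rightarrow [A] = 0$: since the reduced norm commutes with base change, for each $\frakm \in \Max R$ the specialization $u(\frakm) \in \mathrm{O}(A(\frakm), \sigma(\frakm))$ has reduced norm $-1 \neq 1$ in $R/\frakm$ (using that $2 \in \units{R}$ implies $\Char(R/\frakm)\neq 2$). The field-case theorem recalled in the introduction then gives $[A(\frakm)] = 0$ in $\Br(R/\frakm)$ for every $\frakm$. I would conclude $[A] = 0$ in $\Br R$ by invoking the standard fact that an Azumaya algebra over a semilocal commutative ring is split if and only if all its fibers at the maximal ideals are split — proved via lifting a primitive idempotent from $A/(\rad R)\cdot A \cong \prod_{\frakm} A(\frakm)$ and applying Morita theory.

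For the converse: writing $A \cong \End_R(P)$ for a finitely generated projective $R$-module $P$ of rank $n$, the orthogonal involution $\sigma$ corresponds to the adjoint involution of a non-degenerate symmetric $R$-bilinear form $b$ on $P$, and the reduced norm corresponds to the determinant of $R$-linear endomorphisms. The plan is to construct a vector $v \in P$ with $b(v,v) \in \units{R}$; the associated reflection $\tau_v(w) = w - 2\,b(v,v)^{-1} b(v,w)\,v$ then lies in $\mathrm{O}(P, b) = \mathrm{O}(A, \sigma)$ and has determinant $-1$ on the decomposition $P = Rv \oplus v^\perp$. To produce $v$, use the isomorphism $P/(\rad R)P \cong \prod_{\frakm} P(\frakm)$ given by the Chinese Remainder Theorem: in each factor the non-degeneracy of $b(\frakm)$ together with $\Char(R/\frakm)\neq 2$ furnishes some anisotropic vector $\bar v_\frakm$; combining these via CRT and lifting to $P$ yields $v$ with $b(v,v)$ nonzero modulo every $\frakm$, hence in $\units{R}$.

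The main (and essentially only) nontrivial input beyond the field-case theorem and the reflection construction is the splitting criterion for Azumaya algebras over semilocal rings used in the first implication. This criterion is a well-known consequence of idempotent lifting and Morita equivalence, available in standard references such as \cite{Knus_1991_quadratic_hermitian_forms} and \cite{Ford_2017_separable_algebras}. Granting it, the proof reduces cleanly to a pointwise specialization argument on one side and the classical split-case construction on the other.
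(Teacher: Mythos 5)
Your ``if'' direction is essentially the paper's Lemma~\ref{LM:existence-of-good-char-pol}: write $A\cong\End_R(P)$, realize $\sigma$ as the adjoint involution of a symmetric bilinear form (the paper gets this from Saltman's theorem, with the form a priori valued in a rank-one projective $L$, trivialized using $\Pic$ of a semilocal ring), find an anisotropic vector by passing modulo $\Jac R$ and lifting, and take the reflection. That half is fine. The fatal gap is in the other direction: the ``standard fact'' you invoke --- that an Azumaya algebra over a semilocal ring is split if and only if all its fibers at maximal ideals are split --- is \emph{false}. The map $\Br R\to\prod_{\frakm\in\Max R}\Br(R/\frakm)$ is not injective even for local rings. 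For example, over $R=\mathbb{Q}[t]_{(t)}$ the quaternion algebra $A=(-1,\,1+t)_R$ is Azumaya (both slots are units of $R$) and nonsplit, since its image in $\Br\mathbb{Q}(t)$ is nontrivial ($1+t$ is not a sum of two squares in $\mathbb{Q}(t)$: evaluate a putative identity $1+t=f^2+g^2$ at a rational $t_0<-1$ avoiding the poles), yet its fiber at the maximal ideal is $(-1,1)_{\mathbb{Q}}\cong\nMat{\mathbb{Q}}{2}$, which is split. Your proposed proof of the criterion fails at exactly the step you lean on: idempotents lift modulo nil ideals, or in semiperfect rings, but \emph{not} modulo the Jacobson radical of a general semilocal ring --- in the example, $A$ embeds in a division algebra and has no nontrivial idempotents at all, while $A/\frakm A\cong\nMat{\mathbb{Q}}{2}$ has plenty. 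The paper's discussion around Question~\ref{QE:one} and Ojanguren's example is precisely a warning that fiberwise or local triviality does not imply triviality.

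This is why the paper's proof of the hard direction cannot proceed pointwise and must use the element $u$ of reduced norm $-1$ globally, not just its specializations. After reaching $[A(\frakm)]=0$ for every $\frakm$ exactly as you do (having first disposed of odd degree via $2[A]=0$ from the involution and Saltman's $(\deg A)\cdot[A]=0$), the paper picks $v_\frakm\in\mathrm{O}(A(\frakm),\sigma(\frakm))$ with reduced characteristic polynomial $(t+1)(t-1)^{d-1}$ by Lemma~\ref{LM:existence-of-good-char-pol}, and then invokes Theorem~\ref{TH:main-II} --- the surjectivity of $\SO(A,\sigma)\to\prod_\frakm\SO(A(\frakm),\sigma(\frakm))$, for which your sketch has no substitute --- to correct $u$ to an element $v=uw\in\mathrm{O}(A,\sigma)$ reducing to $v_\frakm$ at every $\frakm$. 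The relation $v^\sigma=v^{-1}$ forces the functional equation $f(0)^{-1}t^d f(t^{-1})=f$ for the reduced characteristic polynomial $f$ of $v$, which together with $f(0)=\Nrd(v)=-1$ gives $(t+1)\mid f$; writing $f=(t+1)g$, the scalar $\alpha=g(-1)$ is congruent to $(-2)^{d-1}$ modulo every $\frakm$, hence a unit, and $e=\alpha^{-1}g(v)$ is then an honest idempotent of $A$ itself (no lifting needed: Cayley--Hamilton gives $f(v)=0$) of rank one in every fiber, so $eAe\cong R$ and $[A]=[\End_A(eA)]=[eAe]=0$ by Morita theory. In short, the norm~$-1$ element is used to \emph{manufacture} a global rank-one idempotent; the fiberwise splitting you derive from it is strictly weaker information, as the counterexample above shows.
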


In the process, we prove another result  
of independent interest:

\begin{thm}\label{TH:main-II}
	Let $R$, $A$ and $\sigma$ be as in Theorem~\ref{TH:main-I}. 
	Then the natural map 
	\[
	\SO(A,\sigma)\to \prod_{\frakm\in \Max R}\SO(A(\frakm),\sigma(\frakm))
	\]
	is surjective.
\end{thm}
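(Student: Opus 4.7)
My plan is to use the Cayley parametrization of $\uSO(A,\sigma)$. Let
$\mathrm{Skew}(A,\sigma) := \{a \in A \suchthat a^\sigma = -a\}$,
which, since $2\in\units{R}$, is a direct summand of $A$ and hence a finitely generated projective $R$-module. The Cayley transform $c(a) := (1-a)(1+a)^{-1}$ defines an $R$-scheme isomorphism from the open subscheme $\{a \in \mathrm{Skew}(A,\sigma) \suchthat 1+a \in \units{A}\}$ onto the open subscheme
$U := \{u \in \uSO(A,\sigma) \suchthat 1+u \in \units{A}\}$
of $\uSO(A,\sigma)$, with inverse $u \mapsto (1-u)(1+u)^{-1}$; that $c(a)$ lies in $\SO$ (and not merely $\uO$) follows from $\Nrd_{A/R}(1-a) = \Nrd_{A/R}((1+a)^\sigma) = \Nrd_{A/R}(1+a)$. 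Since $\mathrm{Skew}(A,\sigma)$ is projective over the semilocal ring $R$, the reduction $\mathrm{Skew}(A,\sigma) \to \prod_{\frakm \in \Max R} \mathrm{Skew}(A(\frakm),\sigma(\frakm))$ is surjective; combined with the fact that invertibility in $A$ may be checked residue-wise, this implies that $U(R) \to \prod_{\frakm} U(\kappa(\frakm))$ is surjective, where $\kappa(\frakm) := R/\frakm$.

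Given a target $(\bar u_\frakm)_\frakm \in \prod_\frakm \SO(A(\frakm),\sigma(\frakm))$, the strategy is to find a ``shift'' $v \in \SO(A,\sigma)$ such that $v \bar u_\frakm \in U(\kappa(\frakm))$ for every $\frakm\in\Max R$; then $(v\bar u_\frakm)_\frakm$ lifts by the previous paragraph to some $w \in U(R)$, and $u := v^{-1}w$ is the desired element of $\SO(A,\sigma)$. I would parametrize $v$ itself by Cayley, taking $v = c(a)$. A direct computation rewrites the requirement ``$1 + c(a)\bar u_\frakm$ invertible in $A(\frakm)$'' as ``$(1+\bar u_\frakm) + \bar a_\frakm(1-\bar u_\frakm)$ invertible in $A(\frakm)$'', where $\bar a_\frakm := a \bmod \frakm$. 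By the CRT surjectivity on $\mathrm{Skew}(A,\sigma)$ above, the problem reduces to the following per-residue existence claim: for each $\frakm$ there is $\bar a_\frakm \in \mathrm{Skew}(A(\frakm),\sigma(\frakm))$ with both $1+\bar a_\frakm$ and $(1+\bar u_\frakm) + \bar a_\frakm(1-\bar u_\frakm)$ invertible in $A(\frakm)$.

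Proving this claim over each residue field $\kappa := \kappa(\frakm)$ is the principal step. Via the inverse Cayley map, it is equivalent to the existence of a $\kappa$-rational point of the subscheme $U \cap U\bar u_\frakm^{-1}$ of $\uSO(A(\frakm),\sigma(\frakm))$. Since the latter is a smooth connected algebraic group over $\kappa$ and $U$ is a non-empty Zariski-open subscheme, both $U$ and $U\bar u_\frakm^{-1}$ are non-empty Zariski-open; their intersection is then a non-empty open subscheme and hence contains $\kappa$-rational points whenever $\kappa$ is infinite. The main obstacle is therefore small finite residue fields, which, since $2\in\units{R}$, are at least of odd cardinality $\geq 3$; I expect to handle these by a Lang--Weil or Schwartz--Zippel count on the complement of $U\cap U\bar u_\frakm^{-1}$ inside the affine space $\mathrm{Skew}(A(\frakm),\sigma(\frakm))$, falling back if necessary to a more flexible shift of the form $v = c(a)c(b)$ or $v = -c(a)$, using that $-1$ always belongs to $\SO(A,\sigma)$ to enlarge the pool of admissible shifts.
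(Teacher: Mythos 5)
Your architecture (Cayley big cell, CRT lifting of skew elements, reduction to a per-residue existence statement) is coherent and genuinely different from the paper's method, and the lifting steps you do carry out are correct: skew elements lift because $\Sym_{-1}$ is a direct summand, units lift because invertibility in $A$ can be tested modulo $\Jac A$, and $c(a)\in\SO$ by the $\Nrd\circ\sigma=\Nrd$ computation. But the step you yourself identify as principal --- the per-residue claim over small finite residue fields --- is not merely unproven; in the single-shift form you state it, it is \emph{false}. Take $\kappa=\mathbb{F}_3$ and $A(\frakm)=\nMat{\kappa}{2}$ with $\sigma_\frakm$ adjoint to the hyperbolic plane, so that $\SO(A(\frakm),\sigma(\frakm))=\{\mathrm{diag}(t,t^{-1})\suchthat t\in\units{\kappa}\}=\{\pm 1\}$. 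Here $U(\kappa)=\{1\}$, since $1+\mathrm{diag}(-1,-1)=0$ in characteristic $3$; so for $\bar u_\frakm=-1$ there is no $v\in U(\kappa)$ with $v\bar u_\frakm\in U(\kappa)$, i.e.\ no skew $\bar a_\frakm$ making both $1+\bar a_\frakm$ and $(1+\bar u_\frakm)+\bar a_\frakm(1-\bar u_\frakm)$ invertible. This also shows that no counting argument can rescue the claim as stated: the bad locus can exhaust all of $\mathrm{Skew}(A(\frakm),\sigma(\frakm))(\kappa)$ (and quantitatively, Schwartz--Zippel needs $q$ larger than the total degree $\approx 2d$, while Lang--Weil has ineffective constants at $q=3$). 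Your fallbacks are exactly where the remaining content lies, and they are not developed: $v=-c(a)$ repairs this particular example but requires $-1\in\SO(A,\sigma)$, which holds only for $\deg A$ even, and the two-factor shift $v=c(a)c(b)$ replaces the claim by an equally unproven statement about products of Cayley elements covering $\SO$ over $\mathbb{F}_3$, which would need a case analysis over all types (split and non-split, all degrees, including degree-$2$ tori where $\uSO$ is not semisimple and the group of rational points is tiny). A lesser point: even in the infinite-field case you should cite why $\SO(A(\frakm),\sigma(\frakm))$ is Zariski dense in $\uSO$ (unirationality of connected reductive groups); that part is standard and fine.

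It is instructive to compare with the paper's route, which is engineered precisely to avoid rational-point questions on open cells. Instead of lifting a point of a big cell after a shift, the paper lifts \emph{group generators}: the quasi-reflections $s_{y,a}$, whose sole defining condition $\frac{1}{2}y^\sigma y+a\in\units{\quo{A}}$ lifts trivially along $A\to\quo{A}=A/\Jac A$ (choose any lifts, skew-symmetrize $b$, and invertibility is automatic since it holds modulo the radical). The nontrivial input is then a generation theorem (Theorem~\ref{TH:reflection-generation}, quoting \cite[Theorem~5.8(ii)]{First_2015_Witts_extension_theorem}, after Reiter), which is valid over every semisimple ring, small finite fields included --- exactly the regime where your Cayley-cell argument breaks. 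Finally the paper lands in $\SO$ rather than $\mathrm{O}$ by the reduced-norm idempotent trick ($\alpha^2=1$ and $\alpha\equiv 1 \bmod \Jac R$ force $\alpha=1$); your $u=v^{-1}w$ would get this for free, but only once the shift $v$ exists. As it stands, your proposal is a valid reduction of Theorem~\ref{TH:main-II} to an open (and, in its naive form, false) statement about Cayley cells over finite fields, not a proof; to complete it along these lines you would in effect have to reprove a generation-type result such as the one the paper imports.
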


This   was proved
by Knebusch \cite[Satz~0.4]{Knebusch_1969_isometries_over_semilocal_rings} when 
$A$ is a matrix algebra over $R$. The surjectivity of 
$\mathrm{O}(A,\sigma)\to \prod_{\frakm\in \Max R}\mathrm{O}(A(\frakm),\sigma(\frakm))$
may fail under the same assumptions (Example~\ref{EX:not-surjective}).

\medskip

Applications of both theorems to Witt
groups of Azumaya algebras with involution  appear in   \cite{First_2018_octagon_preprint}.

\medskip

We show that the ``if'' part of Theorem~\ref{TH:main-I} is false
if $R$ is not assumed to be semilocal, see Example~\ref{EX:counter}.
As for the ``only if'' part of Theorem~\ref{TH:main-I}, we ask:

\begin{que}\label{QE:one}
	Let $R$ be a commutative ring with $2\in\units{R}$, let $A$ be an Azumaya
	$R$-algebra, and let $\sigma:A\to A$ be an orthogonal involution.
	Suppose that $\mathrm{O}(A,\sigma)$  contains elements of reduced
	norm $-1$.
	Is it the case that $[A]=0$?
\end{que}

We expect that the answer is ``yes''.
By Theorem~\ref{TH:main-I},
a counterexample, if it exists, will have the remarkable property
that $[A]\neq 0$ while $[A\otimes_RS]=0$ in $\Br S$ for every
semilocal commutative $R$-algebra $S$. We do not know if Azumaya
algebras with this property exist. (Ojanguren \cite{Ojanguren_1974_locally_triv_Az_alg}
gave an example having this property
for any local $S$, but in his example, $[A\otimes_RS]$ remains nontrivial
if $S$ is taken to be the localization of $R$ away from three  particular  prime ideals.)
We further note that the answer to Question~\ref{QE:one} is ``yes''
when $R$ is a regular domain.
Indeed, writing $F$ for the fraction field of $R$,
we observed that $[A\otimes_RF]=0$ in $\Br F$,
and the map $\Br R\to \Br F$
is injective by  the Auslander--Goldman theorem \cite[Theorem~7.2]{Auslander_1960_Brauer_Group}.

\section{Proof of Theorem~\ref{TH:main-II}}

We shall derive Theorem~\ref{TH:main-II} from a more general
theorem addressing semilocal rings with involution.
Recall 
that a   ring $A$
is called semilocal if $A/\Jac A$ is semisimple
aritinian, where $\Jac A$ denotes the Jacobson radical of $A$.

\medskip

Let $(A,\sigma)$ be a ring with
involution such that $2\in\units{A}$.
We let $\Skew(A,\sigma)=\{a\in A\suchthat a^\sigma=-a\}$. 
Given $y\in A$ and $a\in\Skew(A,\sigma)$ such that
$\frac{1}{2}y^\sigma y+a\in\units{A}$,
define
\[
s_{y,a}=1-y (\textstyle{\frac{1}{2}}y^\sigma y+a)^{-1}y^\sigma \in A.
\]

Consider the $(\sigma,1)$-hermitian form $f_1:A\times A\to A$
given by $f_1(x,y)=x^\sigma y$; here, $A$ is viewed as a right module
over itself.
Identifying $\End_A(A)$ with $A$ via $\vphi\mapsto \vphi(1_A)$,
one easily checks that the isometry group of $f_1$ is 
\[
U(A,\sigma):=\{u\in A\suchthat u^\sigma u=1\}.
\]
Moreover, the elements $  s_{y,a} $ are precisely the $1$-reflections
of $f_1$ in the sense of \cite[\S1]{Reiter_1975_Witts_extension_theorem} or 
\cite[\S3]{First_2015_Witts_extension_theorem}.
Thus,  
$s_{y,a}\in U(A,\sigma)$ for all $y$ and $a$ as above
(\cite[Proposition~1.3]{Reiter_1975_Witts_extension_theorem}
or
\cite[Proposition~3.3]{First_2015_Witts_extension_theorem}).
This can also be checked   by computation.

\medskip

Suppose that $A$ is semisimple artinian.
We define a subgroup $U^0(A,\sigma)$ of $U(A,\sigma)$ as follows:
Assume first that $A$ is simple artinian. By the Artin--Wedderburn theorem, 
$A\cong \nMat{D}{n}$,
where $D$ is a division ring with center $K$. We then define
\[
U^0(A,\sigma):=\left\{\begin{array}{ll}
\mathrm{SO}(A,\sigma) &\text{$D=K$ and $\sigma$ is orthogonal}\\
U(A,\sigma)& \text{otherwise}
.
\end{array}
\right.
\]
Next, if $A$ is not simple but $(A,\sigma)$ is simple
as a ring with involution, then there is a simple
artinian ring $B$ and an isomorhism $A\cong B\times B^\op$ 
under which $\sigma$ corresponds to $(x,y^\op)\mapsto (y,x^\op)$ ($x,y\in B$). We
then set
\[
U^0(A,\sigma):=U(A,\sigma).
\]
Finally, when $A$ is an arbitrary semisimple artinian ring,
there exists an essentially unique factorization
$(A,\sigma)=\prod_{i=1}^t (A_i,\sigma_i)$
such that each factor $(A_i,\sigma_i)$ fits into exactly 
one of the previous two cases
(see 
\cite[p.~486]{Reiter_1975_Witts_extension_theorem},
for instance).
We then  define
\[
U^0(A,\sigma)=\prod_{i=1}^t U^0(A_i,\sigma_i).
\]

\begin{example}\label{EX:orth-inv-U-zero}
	Suppose that $A$ is an Azumaya algebra
	over a finite product of fields $R=\prod_{i=1}^t K_i$
	and $\sigma:A\to A$ is an orthogonal involution.
	Then $U^0(A,\sigma)=\SO(A,\sigma)$.
	Indeed, writing $(A,\sigma)=\prod_i (A_i,\sigma_i)$ with $A_i$ a central
	simple $K_i$-algebra,
	we reduce into checking that $U^0(A_i,\sigma_i)=\SO(A_i,\sigma_i)$.
	This follows from the definition if $[A_i]=0$ (in $\Br K_i$),
	and from \cite[Lemma~2.6.1b]{Kneser_1969_Galois_cohomology_classical_groups} 	
	if $[A_i]\neq 0$.
\end{example}

\begin{thm}\label{TH:reflection-generation}
	Let $(A,\sigma)$ be a semisimple artinian ring
	with involution such that $2\in\units{A}$. Then
	the subgroup of $U(A,\sigma)$ generated
	by the elements $s_{y,a}$ with 
	$y\in A$, $a\in\Skew(A,\sigma)$ and
	$\frac{1}{2}y^\sigma y+a\in\units{A}$
	contains $U^0(A,\sigma)$.
\end{thm}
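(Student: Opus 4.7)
The plan is to decompose $(A,\sigma)=\prod_{i=1}^{t}(A_i,\sigma_i)$ into its canonical indecomposable factors (each either simple with involution or of swap type), establish the generation result in each factor, and glue the factor-level results via a length-matching argument.

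For the swap factor $(B\times B^{\op},\mathrm{swap})$, where $U(A,\sigma)=U^0(A,\sigma)\cong\units{B}$, I would verify by direct computation that every $b\in\units{B}$ arises as a single reflection $s_{y,a}$: taking $y=(1-b,1^{\op})$ and $a=(\alpha,(-\alpha)^{\op})\in\Sym_{-1}(A,\sigma)$ with $\alpha=(1+b)/2$ yields $\frac{1}{2}y^\sigma y+a=(1_B,(-b)^{\op})\in\units{A}$ and $s_{y,a}=(b,(b^{-1})^{\op})$, corresponding to $b\in\units{B}$. For the simple factor $(\nMat{D}{n},\sigma)$, I would invoke classical generation theorems: Cartan--Dieudonn\'e's theorem in the orthogonal case ($D=K$), with a case analysis on the parity of $n$ to realize reflections or rotations as $s_{y,a}$'s (for $n$ odd, classical reflections $s_v$ appear as $s_{y,a}$ with $y=vv^\trans$ and $a$ an invertible skew form on $v^\perp$; for $n$ even, generic elements of $\SO_n(K)$ appear as single $s_{y,a}$'s with $y=I_n$ and $a=(I_n-g)^{-1}-I_n/2$), and the analogous Witt-type theorem of \cite{Reiter_1975_Witts_extension_theorem} or \cite{First_2015_Witts_extension_theorem} in the symplectic, unitary, and quaternionic cases. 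The output is that in every indecomposable factor, each element of $U^0(A_i,\sigma_i)$ admits an expression as a product of an \emph{even} number of reflections.

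To glue, I would observe that a tuple $(s_{y_1,a_1},\ldots,s_{y_t,a_t})$ is a reflection in $A=\prod A_i$ precisely when each $\frac{1}{2}y_i^{\sigma_i}y_i+a_i\in\units{A_i}$, and that $s_{y,a}^{-1}=s_{y,-a}$ is itself a reflection (a short calculation verifies $s_{y,a}s_{y,-a}=1$). Given $(u_1,\ldots,u_t)\in U^0(A,\sigma)$, I would write each $u_i$ as a product of $2m_i$ reflections in $A_i$, pad each to a common length $2m=2\max_i m_i$ using $s\cdot s^{-1}$ pairs (and single trivial reflections $s_{0,a}=1$ with $a\in\Sym_{-1}(A_i,\sigma_i)\cap\units{A_i}$, available in every case except $(\nMat{K}{n},\mathrm{transpose})$ with $n$ odd), and then assemble the $k$-th reflections ($k=1,\ldots,2m$) across factors into reflections in $A$ whose product yields $(u_1,\ldots,u_t)$. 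The even-parity condition is ensured by two inputs: in the $n$-odd orthogonal case, a matrix-determinant-lemma calculation gives $\det(s_{y,a})=(-1)^n=-1$, so $\SO_n$-elements are naturally even-length products; in every other factor type, invertible skew elements supply single-reflection identities permitting arbitrary-length padding.

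The main obstacle is the parity coordination in the glueing step, especially when the factorization mixes $n$-odd orthogonal factors (where every reflection has determinant $-1$ and the identity is not a single reflection) with other factor types. The determinant identity $\det(s_{y,a})=(-1)^n$ for the orthogonal simple case is the essential technical input that ensures the needed parity uniformity.
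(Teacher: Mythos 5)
Your factor-by-factor computations largely check out: the swap-case identity $s_{y,a}=(b,(b^{-1})^{\op})$ for $y=(1-b,1^{\op})$, the inverse formula $s_{y,a}^{-1}=s_{y,-a}$ (since $s_{y,a}^\sigma=s_{y,-a}$), the determinant identity $\det(s_{y,a})=(-1)^n\det(c^\sigma)\det(c)^{-1}=(-1)^n$ in the orthogonal case (via $\det(1-yc^{-1}y^\sigma)=\det(1-c^{-1}y^\sigma y)$ and $y^\sigma y-c=c^\sigma\cdot(-1)$), the rank-one recipe realizing hyperplane reflections when $n$ is odd, and the observation that a reflection in $\prod_i A_i$ is exactly a tuple of reflections, so that the gluing reduces to matching lengths, with $s_{1,0}=-1$ giving universal padding by $2$ and $s_{0,a}=1$ giving padding by $1$ outside the odd orthogonal case. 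This is genuinely different in spirit from the paper, whose entire proof is the observation that the $s_{y,a}$ are the $1$-reflections of $f_1$ followed by a direct citation of \cite[Theorem~5.8(ii)]{First_2015_Witts_extension_theorem}, which is stated for semisimple rings and hence already subsumes your decomposition and parity bookkeeping.

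There is, however, one genuine gap: the \emph{even}-dimensional split orthogonal factors ($D=K$, $\sigma$ orthogonal, $n$ even). Here your own determinant identity shows every reflection has $\det s_{y,a}=(-1)^n=+1$, so hyperplane reflections are \emph{not} of the form $s_{y,a}$ (your rank-one recipe fails because $v^\perp$ has odd dimension and carries no invertible skew form), and Cartan--Dieudonn\'e cannot be invoked. What remains is your Cayley parametrization, which produces exactly those $g\in \SO_n(K)$ with $1-g$ invertible; but the claim that these ``generic'' elements generate all of $\SO_n(K)$ is asserted, not proved, and Zariski-density arguments yield nothing about $K$-points when $K$ is finite. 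Elements with eigenvalue $1$ (beyond the identity), and in particular those whose spectrum contains both $1$ and $-1$, require a real argument --- e.g., showing that every $g\in\SO_n(K)$ factors as $u_1u_2$ with $\det(1-u_i)\neq 0$, or exploiting reflections $s_{y,a}$ with $y$ of intermediate rank --- and this is precisely the work done inside the proof of \cite[Theorem~5.8(ii)]{First_2015_Witts_extension_theorem}. A secondary point: for the symplectic, unitary and quaternionic factors you cite \cite{Reiter_1975_Witts_extension_theorem} or \cite{First_2015_Witts_extension_theorem}, but Reiter's paper contains the definition of reflections and a Witt extension theorem rather than this generation statement, so the correct reference is \cite[Theorem~5.8(ii)]{First_2015_Witts_extension_theorem}; and once you cite that theorem for some simple factors, the cleanest repair is to cite it for the even orthogonal factors too --- at which point your decomposition-and-gluing scaffolding, while correct, is subsumed by the citation that constitutes the paper's proof.
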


\begin{proof}
	We observed above that the elements
	$s_{y,a}$ are precisely the reflections
	of a $(\sigma,1)$-hermitian form $f_1:A\times A\to A$.
	The  theorem is therefore a special case of 
	\cite[Theorem~5.8(ii)]{First_2015_Witts_extension_theorem}
	(see also Remark~2.1 in that source).
\end{proof}

\begin{thm}\label{TH:U-zero-onto}
	Let $(A,\sigma)$ be a semilocal ring with involution
	such that $2\in\units{A}$.
	Write $\quo{A}=A/\Jac A$, denote
	the quotient map $A\to \quo{A}$ by $a\mapsto \quo{a}$
	and let $\quo{\sigma}:\quo{A}\to \quo{A}$
	be given by $\quo{a}^{\quo{\sigma}}=\quo{a^\sigma}$.
	Then the image of the   map
	\[ u\mapsto \quo{u}:U(A,\sigma)\to U (\quo{A},\quo{\sigma})\]
	contains $U^0(\quo{A},\quo{\sigma})$. 
\end{thm}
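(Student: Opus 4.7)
The plan is to reduce the problem to lifting reflections one at a time, using Theorem~\ref{TH:reflection-generation}, and then to lift each reflection via the standard semilocal unit-lifting trick. Since $A$ is semilocal, $\quo{A}=A/\Jac A$ is semisimple artinian, so Theorem~\ref{TH:reflection-generation} applies to $(\quo{A},\quo{\sigma})$ and shows that $U^0(\quo{A},\quo{\sigma})$ is contained in the subgroup of $U(\quo{A},\quo{\sigma})$ generated by the reflections $s_{\quo{y},\quo{a}}$. The image of the reduction map $U(A,\sigma)\to U(\quo{A},\quo{\sigma})$ is a subgroup, so it suffices to show that every such generator $s_{\quo{y},\quo{a}}$ lies in this image.

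Given $\quo{y}\in\quo{A}$ and $\quo{a}\in\Sym_{-1}(\quo{A},\quo{\sigma})$ with $\frac{1}{2}\quo{y}^{\quo{\sigma}}\quo{y}+\quo{a}\in\units{\quo{A}}$, I would construct a lift of $s_{\quo{y},\quo{a}}$ as follows. Pick any lift $y\in A$ of $\quo{y}$, pick any lift $a'\in A$ of $\quo{a}$, and set $a=\frac{1}{2}(a'-(a')^\sigma)$. A direct computation gives $a^\sigma=-a$, and the image of $a$ in $\quo{A}$ is $\frac{1}{2}(\quo{a}-\quo{a}^{\quo{\sigma}})=\quo{a}$, using $\quo{a}^{\quo{\sigma}}=-\quo{a}$. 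Thus $a\in\Sym_{-1}(A,\sigma)$ lifts $\quo{a}$.

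Now $\frac{1}{2}y^\sigma y+a\in A$ reduces to the unit $\frac{1}{2}\quo{y}^{\quo{\sigma}}\quo{y}+\quo{a}$ of $\quo{A}$. Since $A$ is semilocal, an element of $A$ is a unit if and only if its image in $A/\Jac A$ is a unit, so $\frac{1}{2}y^\sigma y+a\in\units{A}$ and the reflection $s_{y,a}\in U(A,\sigma)$ is well-defined. The formula $s_{y,a}=1-y(\frac{1}{2}y^\sigma y+a)^{-1}y^\sigma$ is preserved by the ring homomorphism $A\to\quo{A}$, so the image of $s_{y,a}$ in $U(\quo{A},\quo{\sigma})$ is exactly $s_{\quo{y},\quo{a}}$, as required.

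There is no serious obstacle beyond two standard ingredients: the symmetrization trick that produces a skew-symmetric lift of $\quo{a}$, which relies on $2\in\units{A}$, and the lifting of units across $\Jac A$ in a semilocal ring. All the substance of the theorem is packaged inside Theorem~\ref{TH:reflection-generation}.
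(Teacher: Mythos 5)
Your proposal is correct and matches the paper's proof essentially step for step: apply Theorem~\ref{TH:reflection-generation} to the semisimple artinian quotient $\quo{A}$, then lift each reflection $s_{y,a}$ by choosing arbitrary lifts, skew-symmetrizing the lift of $a$ via $b\mapsto\frac{1}{2}(b-b^\sigma)$ (using $2\in\units{A}$), and invoking the fact that an element of $A$ is a unit whenever its image modulo $\Jac A$ is. The only cosmetic difference is that unit-lifting modulo the Jacobson radical holds in any ring, so your appeal to semilocality at that step is unnecessary (semilocality is only needed to make $\quo{A}$ semisimple artinian).
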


\begin{proof} 
	By Theorem~\ref{TH:reflection-generation},
	every element of $U^0(\quo{A},\quo{\sigma})$
	is a product of elements of the form $s_{\tilde y,\tilde a}$
	with $\tilde y\in \quo{A}$, $\tilde a\in\Skew(\quo{A},\quo{\sigma})$,
	$\frac{1}{2}\tilde{y}^\sigma \tilde{y}+\tilde{a}\in\units{\quo{A}}$.
	It is therefore   enough to prove that there exists $u\in U(A,\sigma)$
	with $\quo{u}=s_{\tilde y,\tilde a}$.
	Choose   $y,a\in A$ with $\quo{y}=\tilde{y}$ and $\quo{a}=\tilde{a}$.
	Replacing $a$ with $\frac{1}{2}(a-a^\sigma)$,
	we may assume that $a\in \Skew(A,\sigma)$.
	Since $\quo{\frac{1}{2}y^\sigma y+a}=\frac{1}{2}\tilde{y}^\sigma \tilde{y}+\tilde{a}\in \units{(\quo{A})}$,
	we have $\frac{1}{2}y^\sigma y+a\in\units{A}$.
	We may therefore take $u:=s_{y,a}$, which clearly
	satisfies $\quo{u}=s_{\tilde{y},\tilde{a}}$.
\end{proof}

Now we can prove Theorem~\ref{TH:main-II}.

\begin{proof}[Proof of Theorem~\ref{TH:main-II}]
	Write $J=\Jac R$.
	We first observe that
	$\Jac A=JA $. Indeed,
	$A/J A\cong A\otimes (R/J)$
	is Azumaya over $R/J$, which is a product of fields,
	so $A /JA$ is semisimple artinian, meaning that
	$JA\supseteq \Jac A$. On the other
	hand $JA \subseteq \Jac A$ because $A$ is finitely
	generated as an $R$-module \cite[Ch.~II, Corollary~4.2.4]{Knus_1991_quadratic_hermitian_forms}.
	
	Now, using the notation of Theorem~\ref{TH:U-zero-onto},
	$\quo{A}=A/\Jac A\cong A\otimes (R/\Jac R)=A\otimes \prod_{i=1}^t (R/\frakm_i)\cong
	\prod_{i=1}^t A(\frakm_i)$, so we may identify
	$\prod_{i=1}^t A(\frakm_i)$ with $\quo{A}$. 
	Under this identification, $\prod_{i}\sigma(\frakm_i)$ corresponds
	to $\quo{\sigma}$, so   
	we need to prove that the natural map $u\mapsto \quo{u}:\SO(A,\sigma)\to
	\SO(\quo{A},\quo{\sigma})$ is surjective.
	
	Let  $v\in \SO(\quo{A},\quo{\sigma})$.
	By Theorem~\ref{TH:U-zero-onto} and Example~\ref{EX:orth-inv-U-zero},
	there
	exists $u\in \mathrm{O}(A,\sigma)=U(A,\sigma)$
	such that $\quo{u}=v$.
	We claim that $u\in \SO(A,\sigma)$.
	Indeed, write $\alpha=\Nrd_{A/R}(u)\in R$.
	Then $\alpha^2=1$, and so  $\frac{1}{2}(1-\alpha)$ is an idempotent.
	Since $\Nrd(v)= {1}$ in $R/J$, the image of $\frac{1}{2}(1-\alpha)$
	in $R/J$ is $\frac{1}{2}(1-1)=0$. As $J$ contains no nonzero
	idempotents, it follows that $\frac{1}{2}(1-\alpha)=0$, or rather, $\alpha=1$.
\end{proof}

\begin{example}\label{EX:not-surjective}
	The assumptions of Theorem~\ref{TH:main-II}
	do not guarantee that $\mathrm{O}(A,\sigma)\to \prod_{\frakm\in \Max R}\mathrm{O}(A(\frakm),\sigma(\frakm))$
	is surjective in general.
	As a trivial counterexample one could take $R$ to be any non-local semilocal domain and note
	that $\mathrm{O}(R,\id)=\{\pm1\}$ while 
	$|\prod_{\frakm\in \Max R}\mathrm{O}(R(\frakm),\id_{R(\frakm)})|>2$.
	A counterexample with local $R$ can be constructed as follows.
	Take take $R$ to be the localization of $\mathbb{Z}$ at $5\mathbb{Z}$,  
	let $A$ be the quaternion Azumaya algebra $R\langle i,j\where i^2=j^2=-1,\,ij=-ji\rangle$
	and let $\sigma:A\to A$ be the orthogonal involution   fixing $i$ and $j$.
	Let $\frakm=5R$ denote the   maximal ideal of $R$ and let
	$v$ be the image of $3i$ in $A(\frakm)$.
	One readily checks that  $v\in \mathrm{O}(A(\frakm),\sigma(\frakm))$ and $\Nrd(v)=-1$.
	However,  $v$  cannot be lifted to an element of $\mathrm{O}(A,\sigma)$.
	Indeed, if such a lift existed, it would have reduced norm $-1$, 
	but 
	one can check directly that   elements of $A$ have non-negative reduced norms.	
%
\end{example}


\section{Proof of Theorem~\ref{TH:main-I}}

	\begin{lem}\label{LM:existence-of-good-char-pol}
		Let $A$ be an Azumaya algebra of constant degree $d$ over a semilocal
		ring $R$ with $2\in\units{R}$ and let $\sigma:A\to A$
		be an orthogonal involution.
		If $[A]=0$, then there exists
		$u\in \mathrm{O}(A,\sigma)$ 
		with   $u^2=1$ and reduced characteristic polynomial
		$(t+1)(t-1)^{d-1}$. In particular,
		$\Nrd_{A/R}(u)=-1$.
	\end{lem}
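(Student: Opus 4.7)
The plan is to identify $(A,\sigma)$ with $\End_R(P)$ equipped with the adjoint involution of a regular symmetric bilinear form $q$ on a free $R$-module $P\cong R^d$, diagonalize $q$, and take $u$ to be a reflection in a coordinate hyperplane.

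The first step uses the hypothesis $[A]=0$ in $\Br R$: this gives a faithful finitely generated projective $R$-module $P$ of constant rank $d$ together with an isomorphism $A\cong\End_R(P)$. Under this isomorphism, any orthogonal $R$-linear involution on $A$ is the adjoint involution of a regular symmetric bilinear form $q\colon P\times P\to R$, unique up to scaling by a unit; this is the standard correspondence between involutions and $\veps$-hermitian forms in the Azumaya setting \cite[Ch.~III]{Knus_1991_quadratic_hermitian_forms}, the point being that \emph{orthogonality} of $\sigma$ forces the underlying form to be symmetric rather than alternating. So I may assume $A=\End_R(P)$ and that $\sigma$ is the adjoint of some such $q$.

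Because $R$ is semilocal, finitely generated projective $R$-modules of constant rank are free, so $P\cong R^d$. Because additionally $2\in\units{R}$, the form $q$ admits an orthogonal basis: one first produces a vector $v\in P$ with $q(v,v)\in\units{R}$ by a Chinese-remainder argument over the finite set $\Max R$, then splits off $Rv$ and inducts on $d$. Fix a diagonalizing basis $e_1,\ldots,e_d$ with $q(e_i,e_j)=\delta_{ij}\alpha_i$, $\alpha_i\in\units{R}$, and define $u\in\End_R(P)$ by $ue_1=-e_1$ and $ue_i=e_i$ for $i\ge 2$; this is the reflection in $e_1^\perp$, which manifestly preserves $q$ and so lies in $\mathrm{O}(A,\sigma)$. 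In the basis $(e_i)$, $u$ has matrix $\mathrm{diag}(-1,1,\ldots,1)$, whose reduced characteristic polynomial is $(t+1)(t-1)^{d-1}$ and whose reduced norm is $-1$.

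The main---and essentially only---obstacle is the first reduction: translating an abstract orthogonal involution on a split Azumaya algebra into a concrete symmetric bilinear form on a projective module. Once this is done, semilocalness of $R$ immediately yields freeness of $P$ and an orthogonal basis for $q$, and the explicit reflection $u$ above trivially provides the element with the desired reduced characteristic polynomial and reduced norm.
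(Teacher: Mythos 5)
Your proposal is correct and follows essentially the same route as the paper: split $A$ as $\End_R(P)$, realize $\sigma$ as the adjoint involution of a symmetric bilinear form (the paper gets a line-bundle-valued form from Saltman's theorem and trivializes the line bundle using semilocalness, which is the precise justification for your claim that $q$ may be taken $R$-valued), find a vector of unit length by reduction modulo $\Jac R$, and take the hyperplane reflection. The only difference is that the paper splits off a single unimodular vector $x$ and uses $u=(-\id_{xR})\oplus\id_{x^\perp}$ directly, so your full diagonalization of $q$ (and the freeness of $P$) is harmless but unnecessary.
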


	\begin{proof}
		Since $[A]=0$,
		we may assume that $A=\End_R(Q)$ for some finitely generated projective $R$-module $Q$
		of rank $d$.	
		By \cite[Theorem~4.2a]{Saltman_1978_Azumaya_algebras_w_involution}
		(or, alternatively, \cite[Proposition~4.6]{First_2015_morita_equiv_to_op}),
		there exist    $\delta\in \mu_2(R) $,
		a rank-$1$ projective $R$-module  $L$,
		and a unimodular   $L$-valued bilinear form
		$g:Q\times Q\to L$ satisfying $g(x,y)=\delta  g(y,x) $
		and $g(ax,y)=g(x,a^\sigma y)$ 
		for all $x,y\in Q$, $a\in A$.
		(Here, unimodularity means that $x\mapsto g(x,-):Q\to \Hom_R(Q,L)$
		is bijective.) Since $R$ is semilocal and $L$ has rank $1$, $L\cong R$,
		so we may assume $L=R$. 
		Moreover, $\delta=1$ because $\sigma$ is orthogonal; see
		\cite[p.~170]{Knus_1991_quadratic_hermitian_forms}, for instance.	
        Now, choose a vector $x\in Q$ with $g(x,x)\in\units{R}$ ---
        to see its existence, check it modulo $\Jac R$
        and take an arbitrary lift.
        Writing $P=\{y\in Q\suchthat g(x,y)=0\}$, we have
        $Q=xR\oplus P$ and $\rank P=d-1$. 
        The reflection  $u:=(-\id_{xR})\oplus \id_P$
        is the required   element.
	\end{proof}

	\begin{proof}[Proof of Theorem~\ref{TH:main-I}]
		By writing $R$ as a product of connected semilocal commutative rings
		and working over each factor separately,
		we may assume that $R$ is connected. Thus, $d:=\deg A$ is constant
		on $\Spec R$.
	
%
%
		That $[A]=0$ implies the surjectivity of $\Nrd_{A/R}:\mathrm{O}(A,\sigma)\to\{\pm 1\}$
		has been verified in Lemma~\ref{LM:existence-of-good-char-pol},
		so we turn to prove the converse.
		
		Let $u\in \mathrm{O}(A,\sigma)$ be an element
		with $\Nrd_{A/R}(u)=-1$.
		We let $\frakm_1,\dots,\frakm_t$
		denote the maximal ideals of $R$
		and set  $A_i=A(\frakm_i)$, $\sigma_i=\sigma(\frakm_i)$.
		We further let $u_i$ denote the image of $u$ in $A_i$.
		
		Since $A$ carries
		an involution fixing $R$, we have $[A]=[A^\op]=-[A]$.
		By a theorem of Saltman  \cite{Saltman_1981_Brauer_group_is_torsion},
		we also have $d  [A]=0$ in $\Br R$,
		so $[A]=0$ if $d$ is odd. We may therefore assume that $d$ is even.

		If there exists $1\leq i\leq t$ such
		that $[A_i]\neq 0$ in $\Br (R/\frakm_i)$,
		then $\Nrd(u_i)=1$ by \cite[Lemma~2.6.1b]{Kneser_1969_Galois_cohomology_classical_groups},
		which is impossible (because $2\in\units{R}$).
		Thus, $[A_i]=0$ for all $i$.
		Now, by Lemma~\ref{LM:existence-of-good-char-pol},
		for every $1\leq i\leq t$,
		there is $v_i\in \mathrm{O}(A_i,\sigma_i)$
		with reduced characteristic polynomial equal to $(t+1)(t-1)^{d-1}$.
		
		Observe that $\Nrd(u_i^{-1}v_i)=1$, hence
		$u_i^{-1}v_i\in \SO(A_i,\sigma_i)$.
		By Theorem~\ref{TH:main-II}, there exists
		$w\in \SO(A,\sigma)$
		such that the image of $w $ in $A_i$
		is $u_i^{-1}v_i$ for all $i$. Writing $v:=uw\in \mathrm{O}(A,\sigma)$,
		we see that the image of $v$ in $A_i$ is $v_i$ for all $i$.

		Let $f=f_v \in R[t]$
		denote the reduced characteristic polynomial of $v$.
		Then  
		\[f\equiv (t+1)({t-1})^{d-1} \bmod \frakm_i \]
		for all $1\leq i\leq t$.
		Note that $\sigma:A\to A$
		preserves the reduced characteristic polynomial
		(use \cite[III.\S8.5]{Knus_1991_quadratic_hermitian_forms}
		to see that
		there is a faithfully flat commutative $R$-algebra $S$
		such that  $(A\otimes S,\sigma\otimes\id_S)$ is  isomorphic to $\nMat{S}{d}$
		with the transpose involution).
		Since $v^{-1}=v^\sigma$,
		this means that $f_{v^{-1}}=f_{v^\sigma}=f$,
		so $f(0)^{-1} t^d f(t^{-1})= f$ in $R[t,t^{-1}]$.
		Substituting $t=-1$ and noting that $f(0)= \Nrd_{A/R}(v)=-1$ because
		$d$ is even, we get $-f(-1)=f(-1)$, hence $f(-1)=0$
		and   $t+1\mid f$.
		
		Write $f =(t+1)g $
		and $g =(t+1)r +\alpha$, where $g ,r \in R[t]$  and  
		$\alpha=g(-1)$.
		Since $f\equiv (t+1)(t-1)^{d-1} \bmod \frakm_i $,
		we have $g\equiv (t-1)^{d-1} \bmod\,\frakm_i $
		and  $\alpha \equiv (-2)^{d-1} \bmod \frakm_i $.
		As this holds for all $i$,  we have $\alpha  \in\units{R}$.
		Thus, 
		\[\alpha^{-1}g-\alpha^{-1}(t+1) r=\alpha^{-1}\alpha=1.\] 
		
		Put $e=\alpha^{-1}g(v)$ and $e'= -\alpha^{-1}(v +1)r(v)$.
		Then $e+e'=1_A$ and
		$ee'=e'e=0$ (because   $(v+1)g(v)=f(v)=0$). Thus, $e=e(e+e')=e^2$.
		Let $e_i$ denote the image of $e$ in $A_i$.
		Then  $e_i=(-2)^{1-d}g(v_i)=(-2)^{1-d}(v_i-1)^{d-1}$ 
		has rank one. This means that 
		$eAe$ is a projective $R$-algebra
		of rank $1$, so $eAe\cong R$. 
		Since $eAe\cong\End_A(eA)$, we have $[A]=[eAe]=[R]=0$
		\cite[Proposition~III.5.3.1]{Knus_1991_quadratic_hermitian_forms}.
\end{proof}


\begin{example}\label{EX:counter}
	The ``if'' part of Theorem~\ref{TH:main-I}
	is false if $R$ is not assumed to be semilocal.
	Indeed, take $R$ to be an integral domain with $2\in\units{R}$
	admitting a   non-principal
	invertible fractional ideal $L$ (we view
	$L$ as a subset of the fraction field of $R$).  Define
	\[
	A=\left[\begin{array}{cc} R & L^{-1} \\ L  & R\end{array}\right]
	\]
	and let $\sigma:A\to A$ be the involution given by
	$
	\left[\begin{smallmatrix} a & b \\ c & d \end{smallmatrix}\right]^\sigma=
	\left[\begin{smallmatrix} d & b \\ c & a \end{smallmatrix}\right]
	$.
	To see that $A$ is Azumaya over $R$ and $\sigma$ is orthogonal,
	observe that there is an isomorphism $A\cong \End_R(R\oplus L)$ under which
	$\sigma$ is the adjoint to   the unimodular symmetric bilinear form
	$f:(R\oplus L)\times (R\oplus L)\to L$ given by 
	$f(\left[\begin{smallmatrix} r_1 \\ \ell_1 \end{smallmatrix}\right],
	\left[\begin{smallmatrix} r_2 \\ \ell_2 \end{smallmatrix}\right])=r_1\ell_2+r_2\ell_1$.
	This also shows that $[A]=0$ in $\Br R$.
	
	Straightforward computation shows that elements of $\mathrm{O}(A,\sigma)$
	of determinant $-1$   are
	of the form $\left[\begin{smallmatrix} 0 & x^{-1} \\ x & 0 \end{smallmatrix}\right]$,
	where $x\in L$. If such an element exists, then
	$x^{-1}R\subseteq L^{-1}$, or rather, $L\subseteq xR$. Since $x\in L$,
	this means that $L=xR$, contradicting our assumption that $L$ is not principal.
	Thus, 	$\mathrm{O}(A,\sigma)=\SO(A,\sigma)$
	and $\Nrd_{A/R}:\mathrm{O}(A,\sigma)\to \mu_2(R)$ is not surjective.
	
	We remark that if $R\oplus L\cong M\oplus M$ for some invertible fractional
	ideal $M$, then we also have $A\cong \End_R(M\oplus M)\cong\nMat{R}{2}$.
	Such examples exist, e.g., take $R$ to be a Dedekind domain
	with class group containing an element $[M]$ of order $4$
	(use \cite{Eakin_1973_class_groups}, for instance)
	and let $L=M^2$. 
\end{example}

\bibliographystyle{plain}
\bibliography{MyBib_18_05}

\end{document}